\numberwithin{equation}{section}
\newtheorem{theorem}{Theorem}[section]
\newtheorem{lemma}{Lemma}[section]
\newtheorem{corollary}{Corollary}[section]
\title{Predicting the number and type of twist sites in a rational knot or link}
\author{Mark E. Kidwell}
\email{mek@usna.edu}
\author{Kerry M. Luse}
\email{lusek@trinitydc.edu}
\date{\today}                                     
\begin{document}
\maketitle

\begin{abstract}
A rational knot or link can be put into a standard alternating format which has horizontal and vertical twist sites (double helices).  The number and type of these twist sites are determined by terms of next-to-highest $z$-degree in Kauffman's regular isotopy invariant $\Lambda (a,z)$.  In particular, for a knot or link with $c$ crossings, the coefficient of the $z^{c-2}$ term is equal to the number of twist sites in its standard diagram.  Furthermore, the coefficients of the $a^{-2}z^{c-2}$ and $a^2z^{c-2}$ terms count the number of left-turning and right-turning twist sites, respectively.
\end{abstract}

\section{Introduction}

In attempting to extend some results of Abe, \cite{Abe}, about Kauffman's clock moves, \cite{FKT}, it became necessary to distinguish rational knots and links from more complicated types.  Using Conway's continued fraction rule \cite{Conway}, it is possible to put a rational tangle into a standard format with a certain number of twist sites (double helices) that alternate between horizontal and vertical.  (Twist sites with a single crossing are somewhat troublesome, but can be classified as horizontal or vertical by their relative position in the tangle.)

As is well known to pipe threaders, there are two types of helices, those with positive torsion and those with negative torsion. See Figure \ref{fig:twist type}.  This fact, which also applies to double helices, is independent of any orientation assigned to the strands.  A given twist site can contribute positive or negative writhe to its tangle depending on how the strands are oriented.  In an alternating diagram of a rational tangle, the twist sites must alternate between double helices of positive and negative torsion.

\begin{figure}[h]
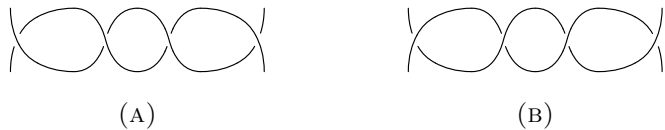

        \centering
         \begin{subfigure}[h]{0.35\textwidth}
                \centering
$$\xygraph{
!{0;/r2.0pc/:}
!{\hunder}
!{\hcrossneg}
!{\hcrossneg}
!{\hunder-}
} $$
                 \caption{}
                 \label{fig:positive twist}
         \end{subfigure}%
         \begin{subfigure}[h]{0.35\textwidth}
                 \centering
                $$ \xygraph{
!{0;/r2.0pc/:}
!{\hover}
!{\hcross}
!{\hcross}
!{\hover-}
}$$
                 \caption{}
                 \label{fig:negative twist}
         \end{subfigure}
\caption{Positive and negative twist types.}
\label{fig:twist type}
 \end{figure}

We show in this paper that certain terms in Kauffman's two-variable, regular isotopy invariant $\Lambda(a,z)$ predict the number and type of twist sites in a rational knot or link in standard alternating form.  It has been known since the 1980's \cite{Thi} that, for a reduced, alternating diagram with $c$ crossings, the terms of $\Lambda$ of highest$z$-degree have $z$-exponent $c-1$ and depend only on the underlying Conway basic polyhedron of the diagram.  In particular, for an algebraic knot or link (and rationals are a subclass), these terms are $a^{-1}z^{c-1}+az^{c-1}$.

Most of our attention will be focused on the next-highest $z$-terms $a^{-2}z^{c-2}$, $z^{c-2}$, and $a^2z^{c-2}$.  We shall call the coefficients of these three terms $u_-$, $u_0$, and $u_+$.  According to Thistlethwaite, no term in $\Lambda$ can have the sum of its $z$-exponent and the absolute value of its $a$-exponent greater than $c$ \cite{Thi}. Thistlethwaite also proves that $u_-$, $u_0$, and $u_+$ are non-negative.  We will prove that the coefficient $u_0$ of $z^{c-2}$ for a rational knot or link equals the number of twist sites in its standard diagram.

Before we can discuss $u_-$ and $u_+$, we must mention that the Hopf link is ambiguous as to whether its twist sites are left-turning or right-turning (see Figure \ref{fig: right and left hopf} ).  In this case, $c=2$, there is one twist site, and the only term in $\Lambda$ with $z$-exponent zero is 1.  That is, $u_-=u_+=0$.  In all other cases involving alternating knots, $u_-+u_+=u_0$, as proved in \cite{Thi}.  We will show that $u_+$ and $u_-$ are equal to the number of right-turning and left-turning twist sites for a rational knot or link with three or more crossings in standard format.

\begin{figure}[h]
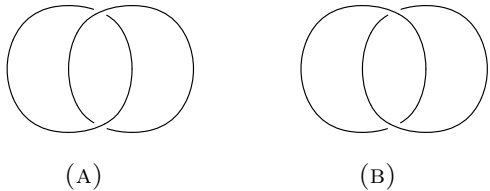

\centering
\begin{subfigure}[h]{0.25\textwidth}
\centering
$$\xygraph{
!{0;/r2.0pc/:}
!{\vunder}
!{\vunder-}
[uur]!{\hcap[2]}
[l]!{\hcap[-2]}
}$$
\caption{}
\label{fig:left hand hopf}
\end{subfigure}
\begin{subfigure}[h]{0.25\textwidth}
$$\xygraph{
!{0;/r2.0pc/:}
!{\vover}
!{\vover-}
[uur]!{\hcap[2]}
[l]!{\hcap[-2]}
}$$
\caption{}
\label{fig:right hand hopf}
\end{subfigure}
\caption{The left- and right-handed Hopf links are equivalent}
\label{fig: right and left hopf}
\end{figure}




\section{Standard format for a rational tangle}

We will draw our rational tangles in a herringbone pattern starting at the northwest (NW) quadrant of the tangle and ending at the southeast (SE) quadrant (Figure \ref{fig:standard knot}).  In addition:

\begin{enumerate}
\item The first and last twist sites have at least two crossings.\label{tangle cond 1}
\item The twist sites alternate between horizontal and vertical.  This rule defines whether a twist site with a single crossing is horizontal or vertical. \label{tangle cond 2}
    \item All horizontal twist sites are left-turning and all vertical twist sites are right-turning. \label{tangle cond 3}
    \item The last (SE) twist site is horizontal. \label{tangle cond 4}
    \end{enumerate}

\begin{figure}[h]
   \centering
    \includegraphics[width=.65\textwidth]{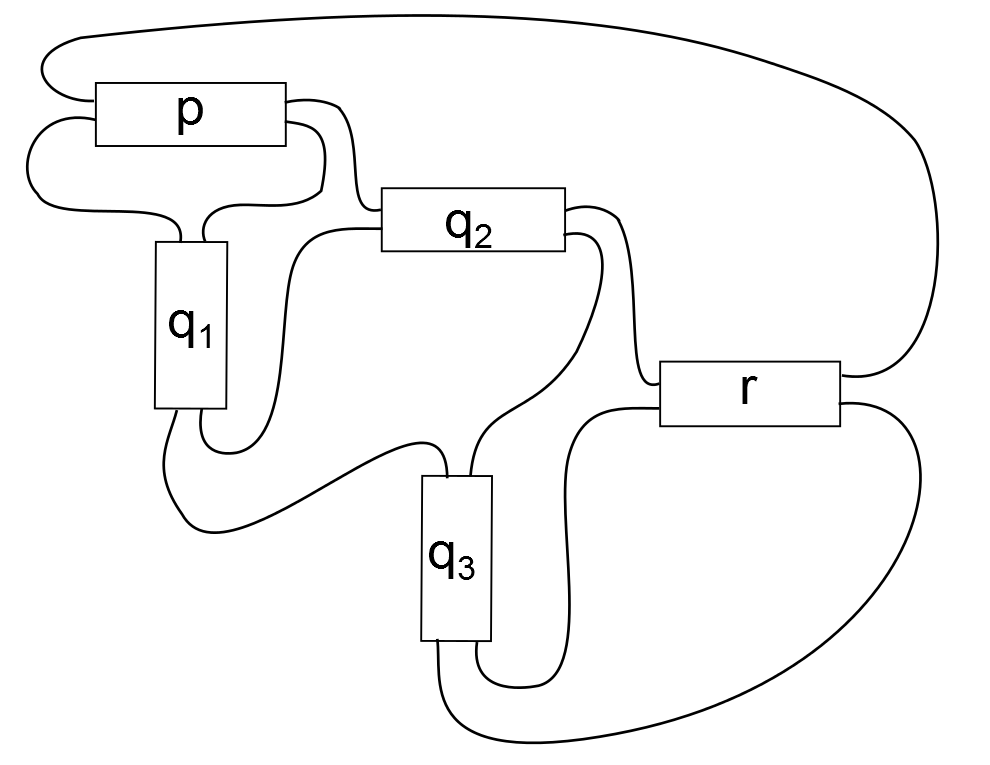}
 \caption{A standard diagram of a rational knot.}\label{fig:standard knot}
 \end{figure}

These rules have a number of simple consequences. Rule \ref{tangle cond 3} ensures that our tangle diagrams are alternating.  At every crossing, the overcrossing segment has positive slope.  In keeping with Rule \ref{tangle cond 4}, we will always use the ``numerator closure,'' $N(T)$, (NW strand joined to NE, SW strand joined to SE) when making our tangles into rational knots or links.  By rules \ref{tangle cond 2} and \ref{tangle cond 3}, the number of left-turning twist sites will never differ from the number of right-turning twist sites by more than one.  By \ref{tangle cond 4}, left-turning twist sites will never be in the minority.

We will write the Conway notation \cite{Conway} for a rational tangle with three or more twist sites as $pq_1q_2\ldots q_kr$, where $p\geq2$, $r\geq2$, and $q_i\geq1$ for $i=1,\ldots,k$.  There are $k+2$ twist sites in such a tangle.  Rational tangles with one or two twist sites are exceptional, and will be treated separately.

\section{Properties of Kauffman's $\Lambda$ polynomial}

We follow the conventions of Kauffman's defining paper \cite{RegIs} and of \cite{Thi}, namely:

\begin{enumerate}
\item A simple closed curve in the plane has $\Lambda$ invariant 1. \label{simple}
\item (The four term skein relation) \label{skein}
$$\Lambda_{D_+}+\Lambda_{D_-}=z(\Lambda_{D_0}+\Lambda_{D_\infty})$$
See Figure \ref{fig:skein relation}.
\item (The loop relation)\label{loop}

\begin{itemize}
\item [a.]
$\Lambda\bigl(\hspace{.5cm}
\xy
(0,0)*{\xygraph{
*\xycircle<10pt>{{.}}}};
(-1,2)*{\xygraph{
!{0;/r1.0pc/:}
!{\hcross}
!{\hcap}
}}
\endxy
\hspace{1.25cm}
\bigr)
=a\Lambda\bigl(\hspace{.5cm}
\xy
(0,0)*{\xygraph{
*\xycircle<10pt>{{.}}}};
(-1,2)*{\xygraph{
!{0;/r1.0pc/:}
!{\huntwist}
!{\hcap}
}}
\endxy
\hspace{1.25cm}
\bigr)$
\item [b.]$\Lambda\bigl(\hspace{.5cm}
\xy
(0,0)*{\xygraph{
*\xycircle<10pt>{{.}}}};
(-1,2)*{\xygraph{
!{0;/r1.0pc/:}
!{\hcrossneg}
!{\hcap}
}}
\endxy
\hspace{1.25cm}
\bigr)=a^{-1}\Lambda\bigl(\hspace{.5cm}
\xy
(0,0)*{\xygraph{
*\xycircle<10pt>{{.}}}};
(-1,2)*{\xygraph{
!{0;/r1.0pc/:}
!{\huntwist}
!{\hcap}
}}
\endxy
\hspace{1.25cm}
\bigr)$
\end{itemize}

\end{enumerate}

\begin{figure}[h]
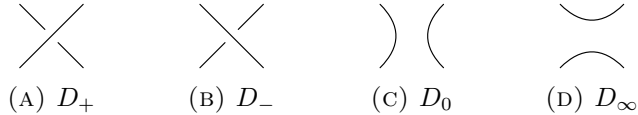

         \begin{subfigure}[t]{0.15\textwidth}
\centering
$\xygraph{
!{0;/r2.0pc/:}
!{\xunderh}
} $
                 \caption{$D_+$}
                 \label{fig: D_+}
\end{subfigure}
\begin{subfigure}[t]{0.15\textwidth}
\centering
                $ \xygraph{
!{0;/r2.0pc/:}
!{\xoverh}
}$
                 \caption{$D_-$}
                 \label{fig: D_-}
         \end{subfigure}
    \begin{subfigure}[t]{0.15\textwidth}
\centering
                $ \xygraph{
!{0;/r2.0pc/:}
!{\xunoverv}
}$
                 \caption{$D_0$}
                 \label{fig: D_0}
         \end{subfigure}
    \begin{subfigure}[t]{0.15\textwidth}
\centering
                $ \xygraph{
!{0;/r2.0pc/:}
!{\xunoverh}
}$
                 \caption{$D_{\infty}$}
                 \label{fig: D_infty}
         \end{subfigure}
\caption{The four terms in the skein relation for Kauffman's $\Lambda$ polynomial.}\label{fig:skein relation}
 \end{figure}

Rule \ref{loop} shows that $\Lambda$ is changed by type I Reidemeister moves, so it can only be a regular isotopy (preserved by Reidemeister II and III) invariant.

The symmetry of $D_+$ and $D_-$ , and of $D_0$ and $D_{\infty}$, makes it difficult to distinguish them.  By our conventions for standard format, all crossings are of the form $D_+$.  If $D_+$ is an alternating diagram, then $D_-$ will have a bridge that overcrosses at three consecutive crossings.  According to \cite{Thi} and \cite{Kid}, the largest exponent of $z$ in $\Lambda_{D_-}$ is at most $c-3$, so this polynomial will not contribute to the $z^{c-2}$ or higher terms of $D_+$.  In a slight deviation from \cite{Thi}, we define $\tilde{\Lambda}(a,z)=u_-a^{-2}z^{c-2}+u_0z^{c-2}+u_+a^2z^{c-2}+a^{-1}z^{c-1}+az^{c-1}$.  This truncated invariant satisfies the skein relation
\begin{equation}
\tilde{\Lambda}_{D_+}=z(\tilde{\Lambda}_{D_0}+\tilde{\Lambda}_{D_{\infty}})\label{eq: truncated skein}
\end{equation}

It is possible and necessary to distinguish the two types of smoothings at a crossing along a twist site with at least two crossings.  We say that a smoothing is \textbf{\em{axial}} if it reduces a twist site of $n$ crossings to a twist site of $n-1$ crossings.  We say that a smoothing is \textbf{\em{cross-sectional}} if it cuts across the twist site and creates a loop.  See Figure \ref{smoothings}.  It then follows that in a horizontal twist site, $D_0$ is a cross-sectional smoothing and $D_{\infty}$ is an axial smoothing, while for a vertical twist site, $D_0$ is an axial smoothing and $D_{\infty}$ is a cross-sectional smoothing.  This last observation can be used to classify smoothings of one-crossing twist sites as axial or cross-sectional.  However, we will avoid smoothing a one-crossing twist site as much as possible. A large part of our argument will involve specifying when a given rational knot or link has the same $\tilde{\Lambda}$ invariant (up to a power of $z$) as an axial smoothing at some crossing and when the cross-sectional smoothing makes a contribution to $\tilde{\Lambda}$.

 \begin{figure}[h]
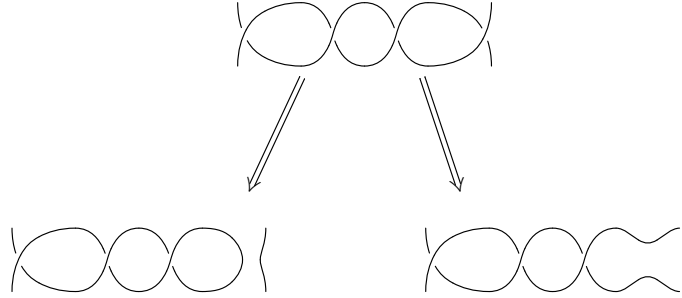


\[
\xy
(-5,30)*{ \xygraph{
!{0;/r2.0pc/:}
!{\hover}
!{\hcross}
!{\hcross}
!{\hover-}
}}="x";
(12,20)*{\xygraph{}}="a";
(28,20)*{\xygraph{}}="b";
(-35,0)*{ \xygraph{
!{0;/r2.0pc/:}
!{\hover}
!{\hcross}
!{\hcross}
!{\hunover-}
}}="y";
(5,5)*{\xygraph{}}="c";
(33,5)*{\xygraph{}}="d";
(20,0)* {\xygraph{
!{0;/r2.0pc/:}
!{\hover}
!{\hcross}
!{\hcross}
!{\huntwist}
}}="z";
{\ar@2{>} "a";"c"};
{\ar@2{>} "b";"d"};
\endxy
\]
\vspace{.2cm}
\caption{A cross-sectional smoothing (left) and an axial smoothing (right) performed on the right most crossing. }
\label{smoothings}
\end{figure}

\section{Main results}

We will say that a rational tangle $pq_1q_2\ldots q_kr$ is \textbf{\em{minimal}} if $p=2$, $r=2$, and $q_i=1$ for $i=1,2,\ldots k$.  Otherwise, we say that the tangle (and a particular twist site) has \textbf{\em{extra crossings}}.

\begin{lemma} \label{lem: T to T'}
If a rational tangle $T$ in standard format with $c$ crossings has extra crossings, then there is a tangle $T'$ with $c-1$ crossings, still in standard format with the same number of twist sites as $T$, such that $T'$ is obtained from $T$ by an axial smoothing.
\end{lemma}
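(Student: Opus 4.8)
The plan is to locate a twist site that carries the extra crossings and perform a single axial smoothing there, then verify that each of the four defining rules for standard format survives. Since $T$ is assumed to have extra crossings, by definition it is not minimal, so at least one of the following holds: $p\geq 3$, $r\geq 3$, or $q_i\geq 2$ for some $i$. I would organize the argument as a short case analysis on which of these occurs, and in every case the smoothed twist site will stay at or above its required minimum, so no twist site is ever destroyed.

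First I would record the key property of the operation. By definition an axial smoothing converts a twist site of $n$ crossings into a twist site of $n-1$ crossings and leaves the remainder of the diagram untouched. In particular it is local to a single double helix: it changes neither the torsion type of that helix (hence preserves its left-turning/right-turning classification, Rule \ref{tangle cond 3}) nor its horizontal/vertical orientation, and it does not alter the way the twist site joins its neighbors. Consequently the resulting diagram $T'$ is again alternating, has exactly $c-1$ crossings, and displays the same ordered sequence of twist sites as $T$.

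Then I would run the cases. If $p\geq 3$, an axial smoothing on the first (NW) twist site yields $(p-1)q_1\cdots q_k r$ with $p-1\geq 2$; symmetrically, if $r\geq 3$ the last (SE) twist site becomes $r-1\geq 2$. In either case Rule \ref{tangle cond 1} is preserved, while Rules \ref{tangle cond 2}, \ref{tangle cond 3}, and \ref{tangle cond 4} are untouched because the smoothed site keeps its position, orientation, and turning type. If instead some $q_i\geq 2$, an axial smoothing on that middle twist site gives $q_i-1\geq 1$, so the site survives and remains in its alternating slot; the count $k+2$ is unchanged.

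The one point that needs care, and which I expect to be the main obstacle, is the boundary case in which a middle twist site is reduced from two crossings to a single crossing. Here the horizontal/vertical label of a one-crossing twist site is not intrinsic but is assigned by its position in the alternating sequence (Rule \ref{tangle cond 2}). I would argue that, since the surrounding twist sites and their alternation are unchanged by the local smoothing, the reduced one-crossing site inherits exactly the label dictated by its unchanged position, so Rules \ref{tangle cond 2} and \ref{tangle cond 3} continue to hold and the number of twist sites remains $k+2$. Assembling the cases shows that $T'$ is in standard format, has $c-1$ crossings, is obtained from $T$ by an axial smoothing, and has the same number of twist sites as $T$, which is the assertion of the lemma.
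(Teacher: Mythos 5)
Your proposal is correct and takes the same approach as the paper, whose entire proof is the single sentence ``Simply perform the axial smoothing at a twist site with extra crossings.'' Your case analysis ($p\geq 3$, $r\geq 3$, or $q_i\geq 2$) and the check that a site reduced to one crossing keeps its positional horizontal/vertical label under Rule \ref{tangle cond 2} are exactly the verifications the paper leaves implicit.
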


\begin{proof}
Simply perform the axial smoothing at a twist site with extra crossings.
\end{proof}

We point out, in anticipation of future research, that Lemma \ref{lem: T to T'} applies not only to rational knots and links, but to any knot or link that is decomposed into rational tangles.

\subsection{Rational tangles with one or two twist sites}

We begin with the $\Lambda$-invariant of the Hopf link, as given in \cite{knotinfo}:

$$\begin{array}{lccc}
\Lambda(a,z)=&-az^{-1}& & +az \\
&&+1&\\
&-a^{-1}z^{-1}&&+a^{-1}z
\end{array}$$

and thus

$$\begin{array}{lcc}
\tilde{\Lambda}(a,z)=&&az\\
&+1&\\
&&+a^{-1}z.
\end{array}$$

The middle term of $z^{c-2}=z^0$ is 1, and, in standard format, the link has one twist site.  As noted earlier, the handedness of this twist site is ambiguous.

We move on to a left-turning trefoil.  The axial smoothing of one of its crossings is the Hopf link, and the cross-sectional smoothing is an unknot with two crossings that give it a $\Lambda$-invariant of $a^2$ (see Figure \ref{fig:trefoil smoothing}).By \eqref{eq: truncated skein}, we have $\tilde{\Lambda}_{\textrm{tref}}(a,z)=z(1+a^{-1}z+az+a^2)$, or

$$\begin{array}{lcc}
\tilde{\Lambda}_{\textrm{tref}}(a,z)=&a^2z&\\
&&+az^2\\
&+z&\\
&&+a^{-1}z^2
\end{array}$$

\begin{figure}[h]
   \centering
    \includegraphics[width=.75\textwidth]{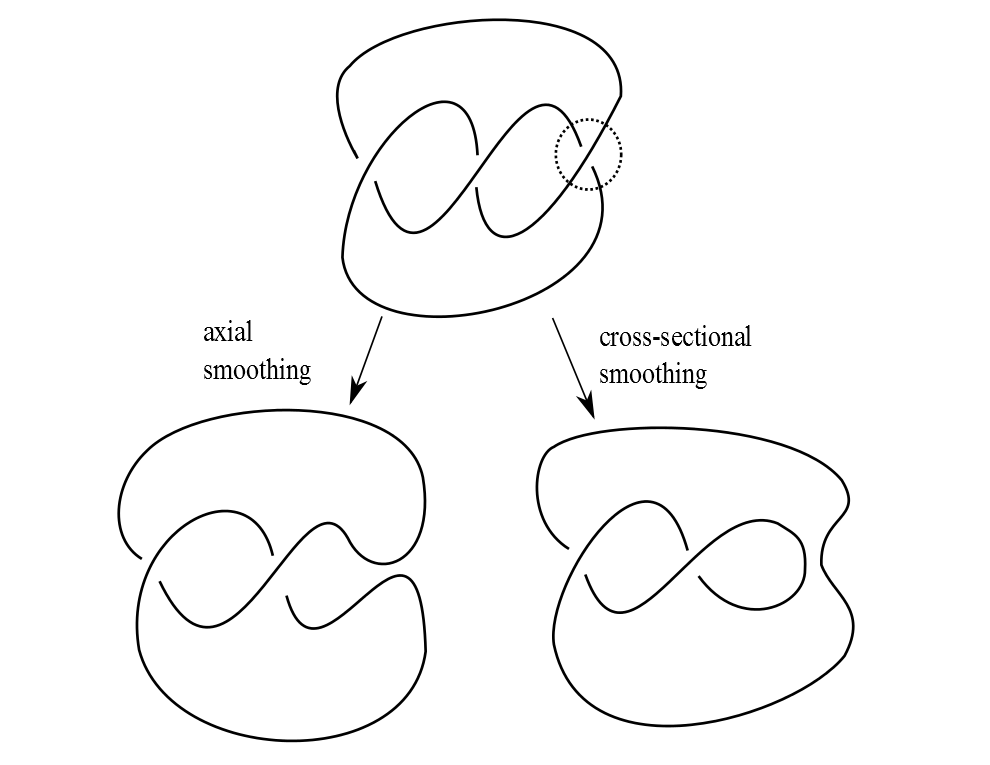}
 \caption{The axial and cross-sectional smoothings of a trefoil knot.}\label{fig:trefoil smoothing}
 \end{figure}

\begin{lemma} Let $K$ be a knot or link with a single left-turning twist site and $c\geq4$ crossings.
Then $\tilde{\Lambda}$ is identical, up to powers of $z$, to $\tilde{\Lambda}$ for the left-turning trefoil.
\end{lemma}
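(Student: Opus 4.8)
The plan is to induct on the crossing number $c$, using the truncated skein relation \eqref{eq: truncated skein} applied at a single crossing of the twist site, with the left-turning trefoil ($c=3$) as the base case. Since $K$ has a \emph{single} left-turning twist site, its tangle is one horizontal twist region with $c$ crossings, closed up by $N(T)$; write $K_c$ for this family, so that $K_3$ is the trefoil already computed above and $K_2$ is the Hopf link. I would prove the sharper statement $\tilde{\Lambda}_{K_c}=z^{\,c-3}\,\tilde{\Lambda}_{\mathrm{tref}}$ for all $c\geq 3$, which immediately yields the lemma for $c\geq 4$ (and explains the ``up to powers of $z$'' clause).

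For the inductive step I would first identify the two smoothings at a crossing of the twist site. Because the site is horizontal, $D_\infty$ is the axial smoothing and $D_0$ the cross-sectional one. The axial smoothing reduces the twist site from $c$ to $c-1$ crossings while keeping it a single left-turning twist site, so $D_\infty=K_{c-1}$ (this is exactly the mechanism of Lemma~\ref{lem: T to T'}). The cross-sectional smoothing cuts the two-strand region into a single unknotted circle that carries the remaining $c-1$ crossings as like-signed curls; repeated application of the loop relation (Rule~\ref{loop}) then gives $\Lambda_{D_0}=a^{\,c-1}$, generalizing the trefoil's value $a^{2}$ at $c=3$.

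Substituting into \eqref{eq: truncated skein} gives $\tilde{\Lambda}_{K_c}=z\bigl(\tilde{\Lambda}_{K_{c-1}}+\tilde{\Lambda}_{D_0}\bigr)$, and the crux of the argument is the degree bookkeeping. The cross-sectional contribution $z\cdot a^{\,c-1}$ lives in $z$-degree $1$, whereas $\tilde{\Lambda}_{K_c}$ retains only the terms of $z$-degree $c-1$ and $c-2$. For $c\geq 4$ we have $c-2>1$, so this term falls strictly below the truncation window and contributes nothing, leaving $\tilde{\Lambda}_{K_c}=z\,\tilde{\Lambda}_{K_{c-1}}$; iterating down to $K_3$ produces $z^{\,c-3}\,\tilde{\Lambda}_{\mathrm{tref}}$, as desired.

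I expect this degree count to be the main (and essentially the only delicate) point, precisely because it is the mechanism that singles out $c=3$: there the cross-sectional term lands in $z$-degree $c-2=1$ and supplies the trefoil's $u_+$ coefficient, so the reduction to $z\,\tilde{\Lambda}_{K_{c-1}}$ fails and the hypothesis $c\geq 4$ is genuinely needed. A secondary item to pin down rigorously is that the cross-sectional smoothing really produces only equal-sign curls, so that $\Lambda_{D_0}$ is a single monomial of $z$-degree $0$; in fact the argument needs nothing more than $\deg_z \Lambda_{D_0}\leq c-3$, so even a crude bound on that diagram's $z$-degree would close the gap.
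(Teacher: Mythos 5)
Your proposal is correct and follows essentially the same route as the paper: apply the truncated skein relation at a crossing of the twist site, observe that the cross-sectional smoothing is an unknot with $\Lambda = a^{c-1}$ whose $z$-degree (even after multiplication by $z$) falls below the truncation window $c-2 \geq 2$, and iterate the axial smoothing down to the trefoil. Your explicit inductive formulation $\tilde{\Lambda}_{K_c} = z^{c-3}\tilde{\Lambda}_{\mathrm{tref}}$ and the remark that only the bound $\deg_z \Lambda_{D_0} \leq c-3$ is needed are just slightly more careful packagings of the paper's own argument.
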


\begin{proof}
 In this case, $\tilde{\Lambda}$ will have terms of degree $c-1$ and $c-2\geq 2$.  The cross-sectional smoothing is again an unknot with $\Lambda=a^{c-1}$.  Even multiplied by $z$ in \eqref{eq: truncated skein}, this term makes no contributions to the $\tilde{\Lambda}$-invariant of the $c$-crossing knot.  We can thus smooth our way back to the trefoil with no change in $\tilde{\Lambda}$ except added powers of $z$.
\end{proof}

That is: when we have a single left-turning twist site with at least three crossings, $u_+=1$ (the number of left turning twist sites), $u_0=1$ (the total number of twist sites), and $u_-=0$ (the number of right-turning twist sites).

We move on to rational tangles of the form $pr$ (two twist sites).  The simplest such tangle satisfying our rules is 22, which closes to the figure-eight knot.  According to \cite{knotinfo}:

$$\begin{array}{lcc}
\tilde{\Lambda}_{\textrm{fig 8}}(a,z)=&a^2z^2&\\
&&+az^3\\
&+2z^2&\\
&&+a^{-1}z^3\\
&a^{-2}z^2
\end{array}$$

The following lemma shows that this pattern holds for the numerator closure of any tangle $pr$.

\begin{lemma} Let $K$ be a knot or link corresponding to the numerator closure of the rational tangle $pr$ with $p\geq2$ and $r\geq2$.  Then $\tilde{\Lambda}_K$ is identical up to powers of $z$ to $\tilde{\Lambda}_{\textrm{fig 8}}$.
\end{lemma}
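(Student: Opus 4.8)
The plan is to induct on the crossing number $c=p+r$, using Lemma~\ref{lem: T to T'} to peel off one extra crossing at a time and the truncated skein relation \eqref{eq: truncated skein} to track $\tilde{\Lambda}$. The base case is the minimal tangle $22$, whose numerator closure is the figure-eight knot with the stated $\tilde{\Lambda}_{\textrm{fig 8}}$. For the inductive step, assume $K$ is the numerator closure of $pr$ with $c>4$, so at least one twist site has extra crossings. Pick a crossing in such a site and apply \eqref{eq: truncated skein}, $\tilde{\Lambda}_K = z(\tilde{\Lambda}_{D_0} + \tilde{\Lambda}_{D_\infty})$. By the discussion preceding Figure~\ref{smoothings}, one of $D_0,D_\infty$ is the axial smoothing and the other is cross-sectional. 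By Lemma~\ref{lem: T to T'} the axial smoothing is the numerator closure $K'$ of a standard tangle with two twist sites (either $p(r-1)$ or $(p-1)r$), having $c-1$ crossings, so the inductive hypothesis gives $\tilde{\Lambda}_{K'} = z^{(c-1)-4}\,\tilde{\Lambda}_{\textrm{fig 8}}$.

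It remains to show the cross-sectional term does not reach $z$-degrees $c-1$ or $c-2$, i.e. that its truncation vanishes. Here I would argue exactly as in the single-twist-site analysis (the trefoil computation and the lemma following it): cutting across the chosen twist site caps it off, and by the loop relation~\ref{loop} the remaining crossings of that site reduce to kinks contributing only powers of $a$, so what survives is the numerator closure of the partner twist site alone. Since the smoothed site has at least three crossings, the partner site has at most $c-3$ crossings, whence this surviving diagram is a reduced alternating two-braid closure on at most $c-3$ crossings and, by Thistlethwaite's degree bound \cite{Thi}, has top $z$-degree at most $c-4$; a split unknotted loop, should one appear, cannot raise this degree. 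Thus the cross-sectional smoothing has top $z$-degree at most $c-4<c-3$, its truncation (which retains only degrees $c-2$ and $c-3$) is zero, and we conclude $\tilde{\Lambda}_K = z\,\tilde{\Lambda}_{K'} = z^{c-4}\,\tilde{\Lambda}_{\textrm{fig 8}}$, completing the induction.

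The one step that is genuine diagram-chasing rather than degree bookkeeping --- and the part I expect to be the main obstacle --- is the claim that the cross-sectional smoothing really does collapse to the partner twist site together with kinks. I would verify this directly from the herringbone picture of Figure~\ref{fig:standard knot}, checking that capping a horizontal site isolates the vertical twists (and conversely) and that the severed crossings become Reidemeister~I curls, precisely as in Figure~\ref{fig:trefoil smoothing}. To keep the bookkeeping unambiguous I would exhaust the horizontal site down to $r=2$ first and then reduce the vertical site down to $p=2$, so that at every stage the surviving partner site, and hence the degree bound above, is determined without ambiguity.
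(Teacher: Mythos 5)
Your proposal is correct and takes essentially the same route as the paper: the paper also reduces to the figure-eight by peeling off extra crossings with axial smoothings, and kills the cross-sectional term by the identical degree count (the cross-sectional smoothing at a site with $p\geq 3$ crossings yields a diagram of the partner site $r$ with kinks, whose top $z$-degree $r-1$, even after the multiplication by $z$ in \eqref{eq: truncated skein}, stays strictly below $c-2$). The diagram-chasing step you flag as the main obstacle is exactly what the paper settles by inspection of its Figure \ref{fig:N43}, so your argument has no gap.
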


\begin{proof}
The case where $p=2$ and $r=2$ is just the case of the figure eight knot.  The remaining cases have either $p>2$ or $r>2$, or both. The invariant $\tilde{\Lambda}$ of such a knot or link will have non-zero $z$-exponents at the powers $p+r-1$ and $p+r-2$.  If we perform the cross-sectional smoothing of a $p$-crossing with $p\geq3$, we get a diagram of $r$ with extra crossings (see Figure \ref{fig:N43} for example).  The highest $z$-power in this smoothed knot or link will be $z^{r-1}$, but when we multiply by $z$ as in \eqref{eq: truncated skein} we get $z^r$.  Since $3\leq p$, $r+3\leq p+r$, $r+1\leq p+r-2$ and so $r<p+r-2$ and the cross-sectional smoothing makes no contribution to the $\tilde{\Lambda}$-invariant of $N(pr)$.  A similar analysis holds if we perform a smoothing at an $r$-site with $r\geq3$.
\end{proof}

\begin{figure}[h]
   \centering
    \includegraphics[width=.75\textwidth]{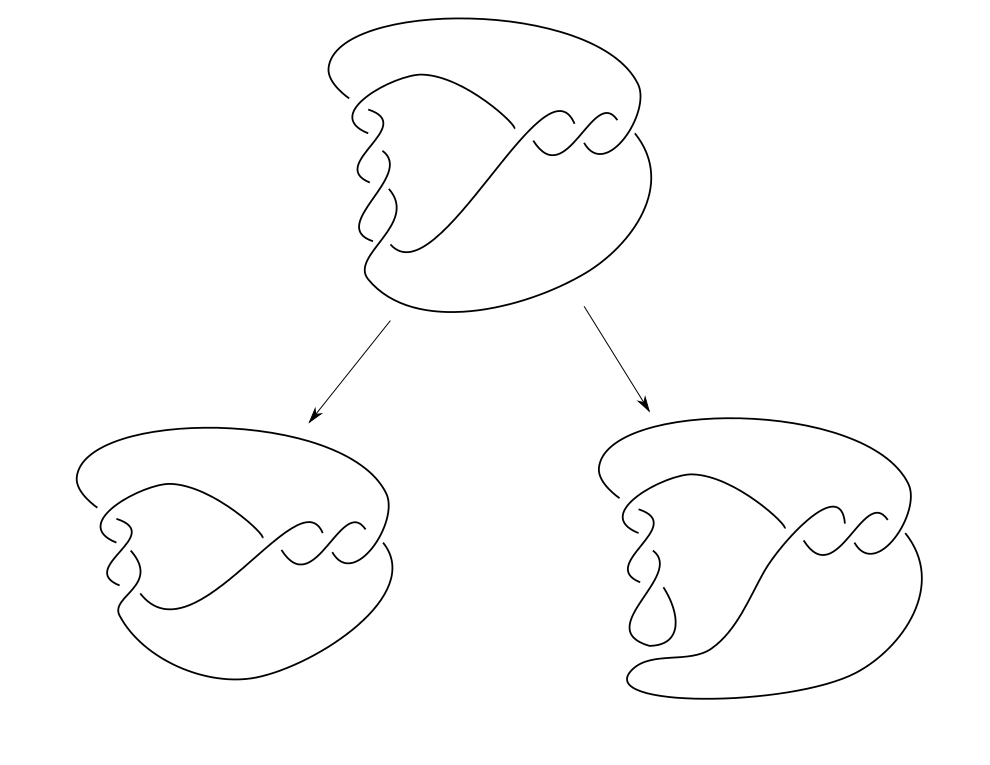}
 \caption{The cross-sectional and axial smoothings of the knot $N(43)$.}\label{fig:N43}
 \end{figure}

In all cases, $u_+=1$ (the number of left-turning twist sites), $u_0=2$ (the total number of twist sites), and $u_-=1$ (the number of right turning twist sites).

\subsection{Rational tangles with three or more twist sites}

\begin{lemma}\label{lem: T' connect sum}
Let $T=pq_1q_2\ldots q_kr$ be a tangle in standard format with at least three twist sites.  Let $T'$ be a tangle obtained by performing a cross-sectional smoothing on one crossing in an interior twist site of $T$.  Then $N(T')$ is the connected sum of two knots or links.
\end{lemma}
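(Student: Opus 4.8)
The plan is to exhibit $N(T')$ as a connected sum by producing a simple closed curve $\gamma$ in the projection plane that meets the diagram transversally in exactly two points and has essential crossings on both sides; such a curve bounds a factoring sphere and splits the knot or link into two summands. The cross-sectional smoothing is the natural source of $\gamma$, since by definition it ``cuts across the twist site and creates a loop,'' turning back both strands that run through the chosen interior twist site.

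First I would record the decomposition coming from the linear chain of twist sites. Writing $T = p\,q_1\cdots q_k\,r$, the chosen interior site $q_j$ separates the chain into a left sub-tangle $T_L$ built from $p, q_1, \ldots, q_{j-1}$ (together with the crossings of $q_j$ on the $p$-side of the smoothed crossing) and a right sub-tangle $T_R$ built from $q_{j+1}, \ldots, q_k, r$ (with the remaining crossings of $q_j$). The key structural point is that in the standard herringbone form of Figure \ref{fig:standard knot}, $q_j$ is a \emph{two-strand bottleneck}: the only strands joining the $T_L$-region of the plane to the $T_R$-region pass through the double helix of $q_j$. Next I would perform the smoothing: the cross-sectional smoothing replaces the chosen crossing by a pair of turnbacks, one closing off the two strands on the $T_L$-side and one on the $T_R$-side. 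After this, no interior strand crosses the line through the smoothed crossing that separates the two regions, and any extra crossings of $q_j$ are absorbed into a twisted loop on one side or the other; this loop is exactly the ``loop'' of the definition and is harmless, since it simply belongs to one summand.

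It remains to take $\gamma$ to be this separating line, closed up, and to check that it meets $N(T')$ in exactly two points. Here the four endpoints enter: I would verify from the standard layout that the numerator-closure arcs $\mathrm{NW}$--$\mathrm{NE}$ and $\mathrm{SW}$--$\mathrm{SE}$ each cross $\gamma$ exactly once, with $T_L$ (hence $\mathrm{NW}$ and the first twist site $p$) on one side and $T_R$ (hence $\mathrm{SE}$ and the last twist site $r$) on the other. The two summands are then the closures of $T_L$ and $T_R$, each a nontrivial knot or link, since $p \geq 2$ and $r \geq 2$ guarantee a genuine twist region on each side that the smoothing at $q_j$ leaves intact.

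The hard part will be the bookkeeping in this last step: confirming that the two numerator arcs really do bridge the pinch (so that $\gamma$ meets $N(T')$ in two points and the result is connected) rather than lying on a single side (which would instead split the diagram). This forces one to track the endpoint connectivity of the standard diagram through the herringbone pattern, and to treat horizontal and vertical interior twist sites separately, since the cutting direction and the orientation of the turnbacks differ between the two cases. The single-crossing interior sites ($q_j = 1$) should be checked directly as the degenerate case in which no loop is created.
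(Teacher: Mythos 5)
Your proposal is correct and follows essentially the same route as the paper: both arguments exhibit a simple closed curve that, before the smoothing, crosses the two-strand bottleneck at the interior twist site (plus the two strands running to the tangle corners), and observe that the cross-sectional smoothing turns back the bottleneck strands so the curve can be taken to meet $N(T')$ in exactly two points, defining the connected sum. The only difference is cosmetic---you place the curve at the smoothed crossing itself, splitting the remaining crossings of $q_j$ between the two summands, whereas the paper separates $pq_1\ldots q_i$ from $q_{i+1}\ldots q_kr$ wholesale---and the endpoint bookkeeping you flag as the ``hard part'' is exactly what the paper dispatches by pointing to its figure.
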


\begin{proof}

See Figure \ref{fig:general smoothing}.  Suppose the cross-sectional smoothing is performed on the $q_i$ twist site.  There is a simple closed curve in the plane that intersects $T$ in four points and separates $pq_1\ldots q_i$ from $q_{i+1}\ldots q_kr$.  (If $i=k$, the $r$-twist site will stand alone.)  After the cross-sectional smoothing, this simple closed curve can be made to intersect $T$ in two points, defining the connected sum.
\end{proof}

\begin{figure}[h]
   \centering
    \includegraphics[width=.75\textwidth]{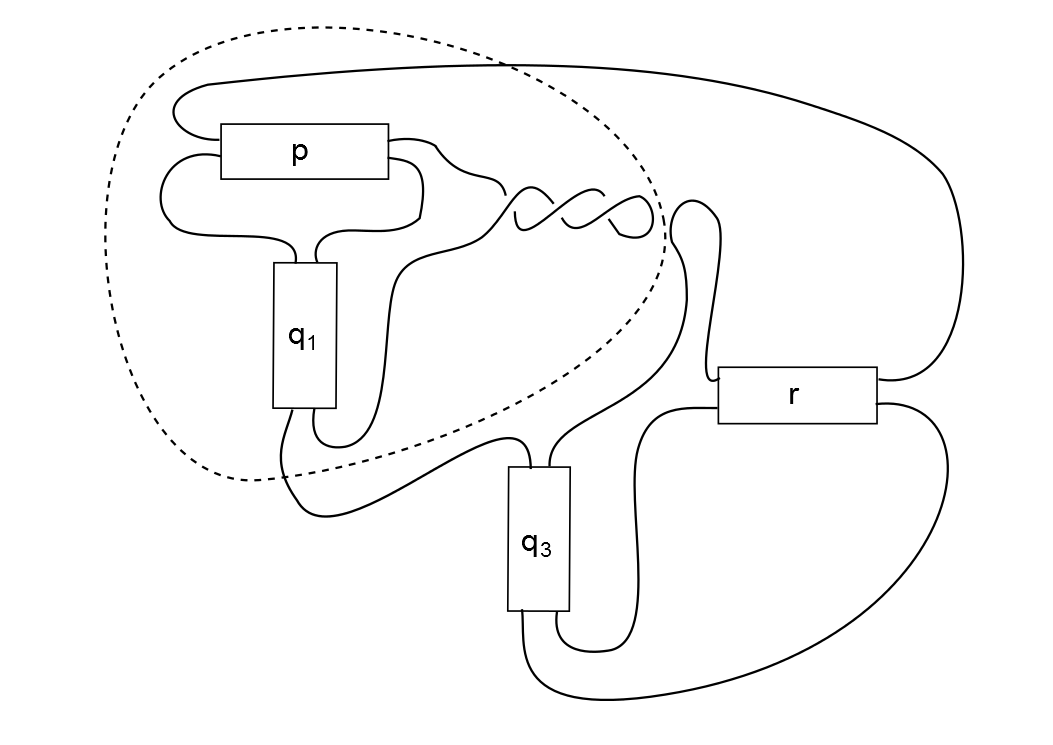}
 \caption{The cross-sectional smoothing of a rational knot in standard form.}\label{fig:general smoothing}
 \end{figure}

Recall that the $\Lambda$-invariant of a connected sum $K' \# K''$ is the product of the $\Lambda$-invariants of its factors.  Suppose $K'$ has $c_1$ crossings and $K''$ has $c_2$ crossings, both reduced and alternating.  For $\tilde{\Lambda}$ we have:

\[
\left( \begin{array}{cc}
u'_+a^2z^{c_1-2}&\\
&az^{c_1-1}\\
+u'_0z^{c_1-2}&\\
&+a^{-1}z^{c_1-1}\\
+u'_-a^{-2}z^{c_1-2}&
\end{array}
\right)
*
\left( \begin{array}{cc}
u''_+a^2z^{c_2-2}&\\
&az^{c_2-1}\\
+u''_0z^{c_2-2}&\\
&+a^{-1}z^{c_2-1}\\
+u''_-a^{-2}z^{c_2-2}&
\end{array}
\right)
=
\]
\[
\cdots
\begin{array}{cc}
+(u'_++u_+^{''})a^3z^{c_1+c_2-3}&\\
&a^2z^{c_1+c_2-2}\\
+(u'_++u'_0+u''_++u''_0)az^{c_1+c_2-3}&\\
&+2z^{c_1+c_2-2}\\
+(u'_0+u'_-+u''_0+u''_-)a^{-1}z^{c_1+c_2-3}&\\
&a^{-2}z^{c_1+c_2-2}\\
+(u'_-+u''_-)a^{-3}z^{c_1+c_2-3}&\\
\end{array}
\]

In particular, if the connected sum has $c$ crossings, then the highest $z$-degree of its $\tilde{\Lambda}$-invariant is $c-2$.

Let the tangle $T$ be in the standard format $pq_1\ldots q_kr$.  We first want to show that if $T$ has $j$ extra crossings, then $\tilde{\Lambda}_T(a,z)=z^j\tilde{\Lambda}_m(a,z)$ where $m$ is the minimal tangle $21^k2$.  The cases $p\geq3$ and $r\geq3$ proceed as in the case of tangles with two twist sites.  Now suppose $q_i\geq2$.  We wish to show that $\tilde{\Lambda}_T(a,z)=z\tilde{\Lambda}_{T'}(a,z)$ where $T'=pq_1\ldots (q_i-1)\ldots q_kr$, which is equivalent to saying that the cross-sectional smoothing $T''$ at a $q_i$ twist site makes no contribution to $\tilde{\Lambda}_T$ if $q_i\geq2$.  If $T$ has $c$ crossings, then $T''$ loses one crossing at the smoothing and at least one other ($q_i\geq2$) when any loops along the $q_i$ twist site are un-looped.  We also know that $T''$ is a connected sum, so that $\tilde{\Lambda}_{T''}$ has terms of $z$-degree at most $c-4$.  But the terms of $\tilde{\Lambda}_T$ have $z$-degree $c-2$ and $c-1$ so that, even multiplied by $z$ in \eqref{eq: truncated skein}, $\tilde{\Lambda}_{T''}$ makes no contribution to $\tilde{\Lambda}_T$.

We iterate this argument until we are down to the minimal tangle $21^k2$.

\begin{theorem} A rational knot or link, $N(T)$, in standard format with $k+2$ twist sites and $c$ crossings has:

\[\begin{array}{lcc}
\tilde{\Lambda}(a,z)=&\left\lceil\frac{k+2}{2}\right\rceil a^2z^{c-2}&\\
& & +az^{c-1}\\
&+(k+2)z^{c-2}\\
& & +a^{-1}z^{c-1}\\
&+\left\lfloor\frac{k+2}{2}\right\rfloor a^{-2}z^{c-2}&
\end{array}
\]

\end{theorem}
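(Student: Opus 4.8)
The plan is to combine the reduction already carried out in the preceding paragraph—namely $\tilde{\Lambda}_T(a,z)=z^{j}\tilde{\Lambda}_m(a,z)$ for the minimal tangle $m=21^{k}2$—with an induction on the number $k+2$ of twist sites, carried out on the minimal tangle itself. Since the crossing number of $T$ exceeds $c_m=k+4$ only by the extra-crossing count $j$, and the $z^{j}$ factor merely shifts $z$-powers, it suffices to establish the stated $z^{c-2}$ coefficients for $m=21^{k}2$ with $c=k+4$; the general case then follows verbatim. I would induct in steps of two, $m_{k}\rightsquigarrow m_{k-2}$, so that the two parities distinguished by the $\lceil\cdot\rceil/\lfloor\cdot\rfloor$ bookkeeping are separated automatically: odd $k$ descends to the one-twist-site (trefoil) pattern and even $k$ to the two-twist-site (figure-eight) pattern, and these two lemmas serve as the base cases.

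For the inductive step I would apply the truncated skein relation \eqref{eq: truncated skein} at the single crossing of the last interior site $q_k$, which is vertical (right-turning) because it abuts the horizontal site $r$. For a vertical site $D_0$ is the axial and $D_\infty$ the cross-sectional smoothing. The axial smoothing deletes $q_k$ and lets its two horizontal neighbours $q_{k-1}$ and $r$ coalesce into a single horizontal region with $1+2=3$ crossings, producing $21^{k-2}3$; this has one extra crossing over $m_{k-2}=21^{k-2}2$, so by Lemma \ref{lem: T to T'} its invariant is $z\,\tilde{\Lambda}_{m_{k-2}}$, and the skein factor $z$ then contributes $z^{2}\tilde{\Lambda}_{m_{k-2}}$, carrying the full inductive data (both the $z^{c-1}$ and the $z^{c-2}$ layers) up by exactly two $z$-degrees. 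By Lemma \ref{lem: T' connect sum} the cross-sectional smoothing is a connected sum, and matching crossing numbers (one crossing lost from $c=k+4$, split as $(k+1)+2$) identifies it as $N(21^{k-1})\,\#\,N(2)$, a rational knot connect-summed with a Hopf link.

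The decisive simplification is that I never need to know the summand $N(21^{k-1})$ in detail. Every reduced alternating diagram has highest-$z$ term $(a+a^{-1})z^{c-1}$, so the product formula for $\tilde{\Lambda}$ forces the connected sum's top term to be $(a+a^{-1})^2z^{c''-2}=(a^{2}+2+a^{-2})z^{c''-2}$ with $c''=k+3$, independently of the summands. Multiplied by $z$ in the skein relation this lands precisely at the $z^{c-2}$ layer, adding $(1,2,1)$ to the triple $(u_+,u_0,u_-)$ and contributing nothing at $z^{c-1}$. Adding this to the axial contribution $(\lceil k/2\rceil,\,k,\,\lfloor k/2\rfloor)$ inherited from $m_{k-2}$ yields $(\lceil k/2\rceil+1,\,k+2,\,\lfloor k/2\rfloor+1)=(\lceil (k+2)/2\rceil,\,k+2,\,\lfloor (k+2)/2\rfloor)$, which closes the induction; the preserved $z^{c-1}$ layer $(a+a^{-1})z^{c-1}$ is the expected Thistlethwaite leading term and serves as a consistency check.

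I expect the main obstacle to be the geometric verification underlying the two smoothings rather than the algebra. One must check carefully that the axial smoothing genuinely merges $q_{k-1}$ and $r$ into a single twist region—so the result really is $21^{k-2}3$ and not a diagram with more twist sites—and that the cross-sectional smoothing produces exactly $N(21^{k-1})\#N(2)$ with no residual Reidemeister-I kink, since such a kink would introduce a stray power of $a$ and corrupt the clean $(1,2,1)$ increment. The crossing-number bookkeeping must likewise be pinned down precisely, because the whole argument hinges on the connected sum's top term (degree $c''-2$) reaching the $z^{c-2}$ layer while the connected sum's lower terms and the axial term's subleading terms stay out of it.
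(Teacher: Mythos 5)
Your proposal is correct and takes essentially the same route as the paper's proof: reduce to the minimal tangle $21^k2$, apply the truncated skein relation \eqref{eq: truncated skein} at the last singleton (vertical) site, use the axial smoothing $21^k2 \to 21^{k-2}3$ to invoke the inductive hypothesis two twist sites down, and treat the cross-sectional smoothing as a connected sum with a Hopf link whose top terms $(a^2+2+a^{-2})z^{k+1}$ supply exactly the $(1,2,1)$ increment to $(u_+,u_0,u_-)$. The only cosmetic difference is that you first strip the extra crossing from $21^{k-2}3$ down to the minimal tangle $21^{k-2}2$ before applying induction, whereas the paper applies the inductive formula to $21^{k-2}3$ directly; both are justified by the same extra-crossing reduction.
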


\begin{proof}
The critical case is $N(21^k2)$.  We will smooth the last singleton vertex, which counts as a vertical twist site.  Figure \ref{fig:21112} illustrates that we lose two twist sites and one crossing when we perform the axial smoothing.  Call the resulting tangle $T'$.  As a result of this type of smoothing, 212 becomes 4, 2112 becomes 23, and for $k\geq 3$, $21^k2$ becomes $21^{k-2}3$.  $T'$ has $2+(k-2)+3=k+3$ crossings.  In all cases, the cross-sectional smoothings are connected sums with one component a Hopf link.  Call this tangle $T''$.  It has no loops, an alternating diagram, and $(k-1)+2+2=k+3$ crossings.  As explained above, the terms of highest $z$-degree in $\tilde{\Lambda}_{T''}$ are $(a^2+2+a^{-2})z^{k+1}$ (connected sum powers are reduced by 1).

\begin{figure}[h]
   \centering
    \includegraphics[width=.75\textwidth]{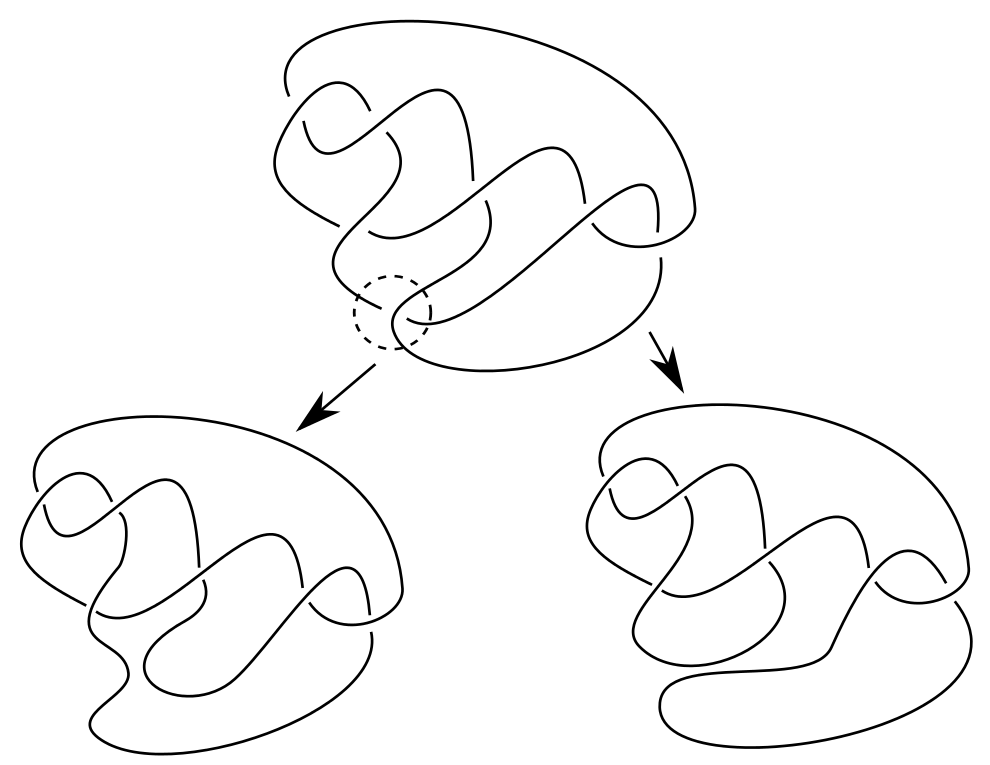}
 \caption{The axial smoothing of $N(21112)$ results in the knot $N(213)$.  The cross-sectional smoothing of $N(21112)$ results in the knot $N(22)\#N(2)$.}\label{fig:21112}
 \end{figure}

Meanwhile, we can assume by induction that:

\[\begin{array}{lcc}
\tilde{\Lambda}_{T'}(a,z)=&\left\lceil\frac{k}{2}\right\rceil a^2z^{k+1}&\\
& & az^{k+2}\\
&+kz^{k+1}\\
& & +a^{-1}z^{k+2}\\
&\left\lfloor\frac{k}{2}\right\rfloor a^{-2}z^{k+1}&
\end{array}
\]

Thus the highest $z$-powers of $\tilde{\Lambda}_{T''}$ are on the same level as the next-to-highest $z$-powers of $\tilde{\Lambda}_{T'}$.  By \eqref{eq: truncated skein}, we have for $\Lambda_T$ that $u_+=\left\lceil\frac{k}{2}\right\rceil+1$, $u_0=k+2$, and $u_-=\left\lfloor\frac{k}{2}\right\rfloor+1$.  If $k$ is even, say $k=2l$, then $\left\lceil\frac{k}{2}\right\rceil+1=l+1=\frac{k}{2}+\frac{2}{2}=\frac{k+2}{2}=\left\lceil\frac{k+2}{2}\right\rceil$.  If $k$ is odd, say $k=2l+1$, then $\left\lceil\frac{k}{2}\right\rceil+1=\left\lceil\frac{2l+1}{2}\right\rceil+1=\left\lceil l+\frac{1}{2}\right\rceil+1=l+1+1=\frac{k-1}{2}+2=\frac{k+3}{2}=\left\lceil\frac{k+3}{2}\right\rceil=\left\lceil\frac{k+2}{2}\right\rceil$.  Similarly, $u_-=\left\lfloor\frac{k}{2}\right\rfloor+1=\left\lfloor\frac{k+2}{2}\right\rfloor$.  The formula given in the theorem is verified for a minimal tangle.  Any tangle with extra crossings can be reduced to this case.

\end{proof}

We call a knot or link  \textbf{\em{top-heavy}} if $u_+\geq u_-$, \textbf{\em{bottom-heavy}} if $u_-\geq u_+$, and \textbf{\em{balanced}} if $u_+=u_-$.  It is a consequence of the choices we made when defining standard format that we get the top-heavy form of $\tilde{\Lambda}$ for knots with an odd number of twist sites.  The mirror image knot will have $\tilde{\Lambda}$ bottom-heavy.  In a rational knot or link with an even number of twist sites, $\tilde{\Lambda}$ will be balanced. (When taking a mirror image, $a$ is replaced by $a^{-1}$ in $\Lambda$.)  We could not have written this paper without the data provided by Livingston and Cha's excellent programs Knot Info and Link Info \cite{knotinfo},\cite{linkinfo}.  We are uncertain, however, how one form of the knot diagram was chosen as ``the'' form and the other was labeled as ``mirror.''  The situation for links is even more complicated.  In every case that we have examined, the Kauffman polynomial given corresponds to ``the'' knot diagram, but there is a slight preference for the bottom-heavy form.

Let us give a simple geometric consequence of our calculations:

\begin{corollary}
Let $T$ be a rational tangle in standard format with an odd number of twist sites.  Then $N(T)$ cannot be an amphicheiral knot or link.
\end{corollary}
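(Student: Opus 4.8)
The plan is to derive a contradiction from the assumed amphichirality using the explicit formula for $\tilde{\Lambda}$ in the Theorem, together with the fact that the Kauffman polynomial of an amphicheiral knot or link must be symmetric under $a \mapsto a^{-1}$. First I would pass from the regular-isotopy invariant $\Lambda$ to the ambient-isotopy Kauffman polynomial $F(a,z) = a^{-w}\Lambda(a,z)$, where $w$ is the writhe of the standard diagram $D = N(T)$. This normalization is forced on us precisely because $\tilde{\Lambda}$ records only the standard diagram and is not itself preserved under Reidemeister I, whereas amphichirality is an ambient-isotopy statement. Since $F$ is a genuine invariant and the mirror operation sends $a \mapsto a^{-1}$ (as noted just after the Theorem), amphichirality of $N(T)$ gives $F(a,z) = F(a^{-1},z)$; that is, $F$ is palindromic in $a$, so the coefficient of each power of $z$ must have its $a$-exponents symmetric about $0$.

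Next I would read off the two top $z$-degrees from $\tilde{\Lambda}$ and apply this symmetry degree by degree. The coefficient of $z^{c-1}$ in $F$ is $a^{-w}(a + a^{-1}) = a^{1-w} + a^{-1-w}$, whose two exponents have midpoint $-w$; palindromicity forces this midpoint to be $0$, so $w = 0$, i.e.\ the writhe of the standard diagram vanishes. With $w = 0$ in hand, the coefficient of $z^{c-2}$ in $F$ is exactly $u_+ a^2 + u_0 + u_- a^{-2}$, and symmetry under $a \mapsto a^{-1}$ now forces $u_+ = u_-$. I regard this two-step reduction, in which the leading $z$-term must be used to pin down $w = 0$ before the $z^{c-2}$ term can deliver $u_+ = u_-$, as the main obstacle: the conclusion becomes a statement about the knot or link only after the $a^{-w}$ shift, and one must be careful that the degree-$(c-1)$ term is diagram-independent (it is, being the top $z$-degree of the invariant $F$, equal to $c-1$ for a reduced alternating diagram).

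Finally I would invoke the Theorem: when the number of twist sites $k+2$ is odd we have $u_+ = \lceil (k+2)/2\rceil = \lfloor (k+2)/2\rfloor + 1 = u_- + 1$, so $u_+ \neq u_-$, contradicting the equality just derived; hence $N(T)$ is chiral. The cases of one and two twist sites are covered by the earlier lemmas, which give $u_+ = 1$, $u_- = 0$ for the single-twist-site $(2,n)$-torus knots, so the argument applies there as well; the sole odd-twist-site situation escaping ``$u_+ \neq u_-$'' is the Hopf link, with one twist site and $u_+ = u_- = 0$, which is exactly the ambiguous case already flagged in the introduction.
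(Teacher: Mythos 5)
Your proof is correct, and its core is the same as the paper's: for an odd number of twist sites the Theorem forces $u_+ = u_- + 1$, which is incompatible with the $a \leftrightarrow a^{-1}$ symmetry that amphichirality would impose. Where you differ is in rigor rather than strategy. The paper's proof is a single sentence applied directly to $\tilde{\Lambda}$, and it quietly elides the point you confront head-on: $\Lambda$ (hence $\tilde{\Lambda}$) is only a regular-isotopy invariant, so amphichirality of $N(T)$, which is an ambient-isotopy statement, does not by itself say anything about the symmetry of $\Lambda$ of the standard diagram. Your two-step argument --- pass to $F(a,z) = a^{-w}\Lambda(a,z)$, use palindromicity of the top coefficient $a^{1-w} + a^{-1-w}$ of $z^{c-1}$ to force $w = 0$, and only then extract $u_+ = u_-$ from the $z^{c-2}$ coefficient --- supplies exactly the normalization the paper's one-liner takes for granted. (An alternative would be to invoke the Tait-conjecture fact that the writhe of a reduced alternating diagram is a link invariant, hence zero for an amphicheiral link, but your degree argument is more self-contained.) Your closing caveat about the Hopf link is also well placed: it is the unique odd-twist-site case with $u_+ = u_- = 0$, so neither your argument nor the paper's applies to it, consistent with the paper's standing restriction to knots and links with at least three crossings.
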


\begin{proof}

Since $u_+=u_-+1$, the invariant $\tilde{\Lambda}$ does not have the necessary symmetry under $a\leftrightarrow a^{-1}$.

\end{proof}

\section{More complicated cases}

We hope to extend our work to algebraic knots and links that are not rational.  Our searches in Knot Info and Link Info have revealed at least two differences in the $\tilde{\Lambda}$ invariants:

\begin{enumerate}
\item The coefficient $u_0$ can exceed the number of twist sites by 1 (for example, $8_5$ and $8_{10}$ in Rolfsen notation) or 2 (for example, $10_{79}-10_{81}$).  We have not observed an excess of 3 or more, but cannot rule it out at this point.
\item In rational knots and links in standard format, we have $u_+=u_-$ or $u_+=u_-+1$.  This near-balance does not necessarily hold in non-rational knots or links.  For example, the non-rational link with smallest crossing number $6^3_1=L6a5=[2;2;2]$ (see Figure \ref{fig:6^3_1}), has $u_+=3$, $u_0=4$, and $u_-=1$.  The coefficient $u_0$ exceeds the number of twist sites by 1, and $u_-$ exceeds the number of right-turning twist sites by 1, while $u_+$ accurately predicts the number of left-turning twist sites.
\end{enumerate}

 \begin{figure}[h]
   \centering
    \includegraphics[width=.5\textwidth]{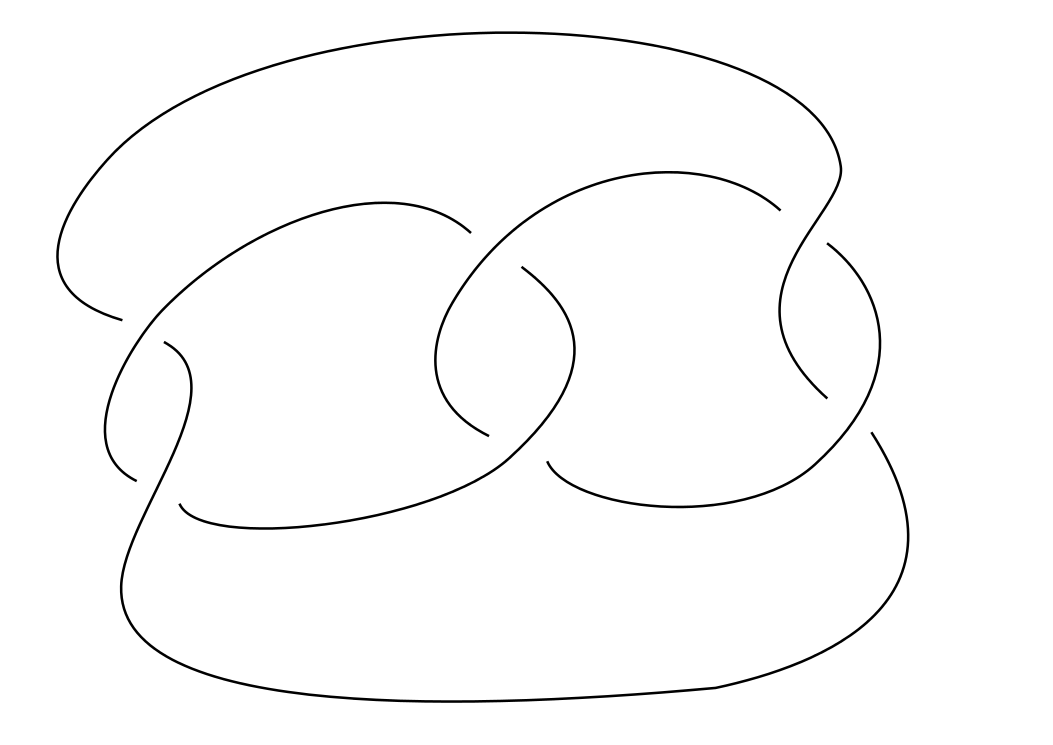}
 \caption{The three component link with Rolfsen notation $6^3_1$.}\label{fig:6^3_1}
 \end{figure}


\end{document}